\documentclass[12pt]{amsart}

\textwidth145mm
\textheight225mm
\hoffset-11mm
\voffset-10mm

\usepackage{amsmath, amsfonts,amsthm,times,graphics}

\usepackage[active]{srcltx}

 \makeatletter
\renewcommand*\subjclass[2][2010]{%
  \def\@subjclass{#2}%
  \@ifundefined{subjclassname@#1}{%
    \ClassWarning{\@classname}{Unknown edition (#1) of Mathematics
      Subject Classification; using '2010'.}%
  }{%
    \@xp\let\@xp\subjclassname\csname subjclassname@#1\endcsname
  }%
}

 \makeatother
\newtheorem{theorem}{Theorem}[section]
\newtheorem{lemma}[theorem]{Lemma}
\newtheorem{corollary}[theorem]{Corollary}

\newtheorem{proposition}[theorem]{Proposition}

\theoremstyle{definition}
\newtheorem{definition}[theorem]{Definition}

\newtheorem{remark}[theorem]{Remark}

 \makeatletter
\renewcommand*\subjclass[2][2010]{%
  \def\@subjclass{#2}%
  \@ifundefined{subjclassname@#1}{%
    \ClassWarning{\@classname}{Unknown edition (#1) of Mathematics
      Subject Classification; using '1991'.}%
  }{%
    \@xp\let\@xp\subjclassname\csname subjclassname@#1\endcsname
  }%
}

 \makeatother

\begin{document}

\title[On some random variables involving
 Bernoulli random variable]{
On some random variables involving Bernoulli random variable}

\author{Romeo Me\v strovi\' c}
\address{Maritime Faculty Kotor, University of Montenegro, 
85330 Kotor, Montenegro} \email{romeo@ac.me}

 \subjclass{60C05, 94A12, 11A07, 05A10}
\keywords{Compressive sensing, 
Complex-valued discrete random variable, 
Bernoulli random variable, Sub-Gaussian random variable, Sub-Gaussian 
norm, Orlicz norm, McDiarmid's inequality.} 
 
\begin{abstract}
  Motivated by the recent investigations given in  \cite{ssa}
and the fact that Bernoulli probability-type models
were often used in  the study on some problems 
in theory of  compressive sensing, here we define and study 
the  complex-valued discrete random variables $\widetilde{X}_l(m,N)$  
($0\le l\le N-1$, $1\le m\le N$). Each of these random variables is defined 
as a linear combination of $N$ independent identically distributed   
$0-1$ Bernoulli random variables. We prove that for $l\not=0$,
 $\widetilde{X}_l(m,N)$ is the zero-mean random variable,
and we also determine the variance of $\widetilde{X}_l(m,N)$ and
its real and imaginary parts. Notice that $\widetilde{X}_l(m,N)$
belongs to the class of sub-Gaussian random variables that
are significant in some areas of theory of  compressive sensing.
In particular, we prove some probability estimates for the mentioned
random variables.  These estimates are used to 
establish the upper bounds of the sub-Gaussian norm of
their real and imaginary parts. We believe that our results 
should be implemented in certain applications of 
sub-Gaussian random variables  for solving some problems in compressive 
sensing of sparse signals.
   \end{abstract}  
  \maketitle


\section{Introduction and preliminary results}

In the statistical analysis for efficient 
detection of signal components when missing data samples are present
developed by LJ. Stankovi\'c, S. Stankovi\'c and M. Amin in \cite{ssa}  
(cf. \cite{sso} and \cite{sdsv}),
a cruciaal role plays a class of complex-valued  random variables 
denoted in \cite{m8} as $X_l(m,N)$  ($0\le l\le N-1$, $1\le m\le N$).
Furthermore, in \cite{m1} (also see \cite{m9}) it 
was generalized the random variable $X_l(m,N)$.
Motivated by these random variables and the known fact
that Bernoulli probability-type models, involving Bernoulli random variable,
were often used in  the study 
on some compressive sensing-type problems (see, e.g., 
the famous paper \cite{crt} by   Cand\`{e}s, Romberg and Tao 
and \cite{mbc}), 
here we define and study complex-valued  random variables 
$\widetilde{X}_l(m,N)$  ($0\le l\le N-1$, $1\le m\le N$).
 For more information on the development of  compressive sensing 
(also known as {\it compressed sensing}, 
{\it compressive sampling}, or {\it sparse recovery}), see \cite{do}, 
\cite{fr}, \cite[Chapter 10]{s1} and \cite{sdt}. 
For an excellent survey on this topic with applications
 and related references, see  \cite{sr} (also see \cite{op}).

   \begin{definition} 
Let $N$, $l$ and $m$ be nonnegative  integers such that $0\le l\le N-1$
and $1\le m\le N$.  Let  $B_n$ $(n=1,\ldots,N)$ be a sequence 
of independent identically distributed  {\it Bernoulli 
random variables} (binomial distributions) taking only
the values 0 and 1 with probability 0 and $m/N$, respectively,
i.e.,
   $$
B_n=\left\{ 
 \begin{array}{ll}
0 & \mathrm{with\,\,probability \,\,} 1-\frac{m}{N}\\
1 & \mathrm{with\,\,probability \,\,} \frac{m}{N}. 
 \end{array}\right.\leqno(1)
   $$
Then the discrete random variable $\widetilde{X}_l(m,N)=
\widetilde{X}_l(m,N)$ is defined as
a sum
  $$
\widetilde{X}_l(m,N)=\sum_{n=1}^N\exp\left({-\frac{2jn l\pi}{N}}
\right)B_n.\leqno(2)
   $$
\end{definition}

We see from (2) that the real and imaginary part
of $\widetilde{X}_l(m,N)$ are real-valued random variables 
$\widetilde{U}_l(m,N)$ and $\widetilde{V}_l(m,N)$, respectively defined by 
  $$
\widetilde{U}_l(m,N)=\sum_{n=1}^N\cos\left({\frac{2jn l\pi}{N}}
\right)B_n\leqno(3)
   $$
and 
$$
\widetilde{V}_l(m,N)=-\sum_{n=1}^N\sin\left({\frac{2jn l\pi}{N}}
\right)B_n.\leqno(4)
   $$
From Definition 1.1 it follows  that the range of the random variable 
$\widetilde{X}_l(m,N)$ consists of all possible $2^N-1$ sums of elements 
of (multi)set $\{e^{-j2nl\pi/N}:\, n=1,2,\ldots,N\}$.    

Notice that for $l=0$  $\widetilde{X}_l(m,N)$ becomes
   $$
\widetilde{X}_0(m,N)=\sum_{k=1}^NB_k\sim B\left(N,\frac{m}{N}\right),
   $$ 
where $B\left(N,m/N\right)$ is the  binomial distribution
with  parameters $N$ and $p=m/N$ and the probability mass function 
given by 
 $$
{\rm Prob}\left(B\left(N,\frac{m}{N}\right)=k\right)={N\choose k}
\left(\frac{m}{N}\right)^k\left(1-\frac{m}{N}\right)^{N-k}.
  $$

Notice that by {\it De Moivre-Laplace central limit theorem}, 
the distribution of  $B\left(N,m/N\right)$ is close 
to that of  normal random variable 
${\mathcal N}\left(m, \sqrt{m(N-m)/N}\right)$ for sufficiently large $N$.

If $N=2l$, where $l\ge 1$ is an integer, then $\widetilde{X}_l(m,N)$ 
becomes
   $$
\widetilde{X}_l(m,2l)=\sum_{k=1}^{2l}(-1)^kB_k=
\sum_{k=1}^lB_{2k}-\sum_{k=1}^lB_{2k-1}\sim 
B'\left(l,\frac{m}{2l}\right)-B''\left(l,\frac{m}{2l}\right),
   $$
where $B'\left(2l,m/(2l)\right)$ and $B''\left(2l,m/(2l)\right)$ 
are independent identically distributed binomial distributions with 
parameters $l$ and $p=m/N$ (i.e., two independent copies of 
$B\left(2l,m/(2l)\right)$).

\begin{remark}
Set $\Sigma=\{n: 1\le n\le N \,\, \mathrm{and}\,\, B_n=1\}$,
where $B_n$ is a random variable defined by (1). Then
the cardinality $|\Sigma|$ of $\Sigma$ is also random, following a 
binomial distribution whose expected value  is equal to 
  $$
\Bbb E[|\Sigma|]=N\cdot \frac{m}{N}=m.
   $$   
We point out also that for large values of  $N$ and $m\ll N$ there holds 
$|\Sigma|/m \approx 1$ with high probability. In other words, for such
values of $N$ and $m$, the number of observed frequencies is 
$\approx  m$ with high probability.  
\end{remark}

\section{The  results}

We start with  the following result.

\begin{proposition}     
Let $N$, $l$ and $m$ be positive integers such that $l\le N-1$
 and $1\le m\le N$.  Let 
$\widetilde{X}_l(m,N)=\widetilde{U}_l(m,N)+j\widetilde{V}_l(m,N)$  
be  the discrete random variable from Definition $1.1$, 
where the random variables  $\widetilde{U}_l(m,N)$
and $\widetilde{V}_l(m,N)$ are the real and imaginary part
of $\widetilde{X}_l(m,N)$, respectively. 
Then the expected values of 
$\widetilde{X}_l(m,N)$, $\widetilde{U}_l(m,N)$ and 
$\widetilde{V}_l(m,N)$  are respectively given by
  $$
\Bbb E[\widetilde{X}_l(m,N)]=
\Bbb E[ \widetilde{U}_l(m,N)]=\Bbb E[\widetilde{V}_l(m,N)]=0.\leqno(5) 
   $$
If in addition we suppose that $N\not= 2l$, then 
for the variance of $\widetilde{X}_l(m,N)$, 
$\widetilde{U}_l(m,N)$ and $\widetilde{V}_l(m,N)$, respectively there  holds
   $$
{\rm Var}[\widetilde{X}_l(m,N)]=\frac{m(N-m)}{N},\leqno(6)
   $$
and 
  $$
{\rm Var}[\widetilde{U}_l(m,N)]={\rm Var}[\widetilde{V}_l(m,N)]=
\frac{m(N-m)}{2N}.\leqno(7)
   $$
\end{proposition}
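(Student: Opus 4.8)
The plan is to reduce everything to two elementary ingredients and then assemble the three assertions by linearity and bilinearity of expectation. The first ingredient consists of the moment identities for the Bernoulli variables: since each $B_n$ takes only the values $0$ and $1$ we have $B_n^2=B_n$, so $\Bbb E[B_n]=\Bbb E[B_n^2]=m/N$, while independence gives $\Bbb E[B_nB_k]=(m/N)^2$ for $n\neq k$. The second ingredient is the geometric-sum identity
$$
S_r:=\sum_{n=1}^N\exp\left(\frac{2jnr\pi}{N}\right)=0\qquad\text{whenever } r\not\equiv 0\pmod N,
$$
which holds because $\exp(2jr\pi/N)$ is then an $N$-th root of unity different from $1$. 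In particular $S_{-l}=0$ since $1\le l\le N-1$, and taking real and imaginary parts gives $\sum_{n=1}^N\cos(2nl\pi/N)=\sum_{n=1}^N\sin(2nl\pi/N)=0$. With these in hand, the expectations in $(5)$ are immediate: by linearity $\Bbb E[\widetilde{X}_l(m,N)]=(m/N)\,S_{-l}=0$, and the vanishing of $\Bbb E[\widetilde{U}_l(m,N)]$ and $\Bbb E[\widetilde{V}_l(m,N)]$ follows either by taking real and imaginary parts of this identity or directly from the two trigonometric sums applied to $(3)$ and $(4)$.

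For the variance $(6)$, since $\Bbb E[\widetilde{X}_l(m,N)]=0$ we have ${\rm Var}[\widetilde{X}_l(m,N)]=\Bbb E\big[\abs{\widetilde{X}_l(m,N)}^2\big]$. I would expand
$$
\abs{\widetilde{X}_l(m,N)}^2=\sum_{n=1}^N\sum_{k=1}^N\exp\left(-\frac{2j(n-k)l\pi}{N}\right)B_nB_k,
$$
take expectations, and split the double sum into its diagonal ($n=k$) and off-diagonal ($n\neq k$) parts using the moment identities. The diagonal contributes $\sum_{n=1}^N(m/N)=m$. For the off-diagonal part, observe that the full double sum of exponentials equals $\abs{S_{-l}}^2=0$, so the off-diagonal exponential sum equals $-N$ (subtracting the $N$ diagonal terms, each equal to $1$); multiplying by $(m/N)^2$ gives $-m^2/N$. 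Adding the two contributions yields ${\rm Var}[\widetilde{X}_l(m,N)]=m-m^2/N=m(N-m)/N$. Note that this step uses only $S_{-l}=0$ and so does not yet require $N\neq 2l$.

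For $(7)$ I would write $\widetilde{U}_l(m,N)=\sum_n c_nB_n$ with $c_n=\cos(2nl\pi/N)$; zero mean gives ${\rm Var}[\widetilde{U}_l(m,N)]=\sum_{n,k}c_nc_k\,\Bbb E[B_nB_k]$, which the same diagonal/off-diagonal split reduces to $(m/N)\sum_n c_n^2+(m/N)^2\big((\sum_n c_n)^2-\sum_n c_n^2\big)$. Since $\sum_n c_n=0$, everything comes down to evaluating $\sum_{n=1}^N c_n^2$. Here the hypothesis $N\neq 2l$ enters: writing $c_n^2=\tfrac12\big(1+\cos(4nl\pi/N)\big)$ gives $\sum_n c_n^2=N/2+\tfrac12\,{\rm Re}(S_{2l})$, and $S_{2l}=0$ precisely when $2l\not\equiv 0\pmod N$; as $1\le l\le N-1$, the only way to have $N\mid 2l$ is $N=2l$, which is excluded, so $\sum_n c_n^2=N/2$. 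This yields ${\rm Var}[\widetilde{U}_l(m,N)]=(N/2)(m/N)-(N/2)(m/N)^2=m(N-m)/(2N)$. The computation for $\widetilde{V}_l(m,N)$ is identical with $s_n=\sin(2nl\pi/N)$ in place of $c_n$ (the overall sign in $(4)$ being irrelevant to the variance), using $s_n^2=\tfrac12(1-\cos(4nl\pi/N))$ and again $S_{2l}=0$.

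The main, and essentially the only nontrivial, obstacle is the vanishing of $\sum_{n=1}^N\cos(4nl\pi/N)$, which is exactly where $N\neq 2l$ is indispensable: when $N=2l$ each term equals $1$, the sum is $N$, and in fact $\widetilde{V}_l(m,N)\equiv 0$, so $(7)$ genuinely fails. Everything else reduces to the single root-of-unity identity together with $B_n^2=B_n$. A useful consistency check on the final formulas is that ${\rm Var}[\widetilde{U}_l(m,N)]+{\rm Var}[\widetilde{V}_l(m,N)]={\rm Var}[\widetilde{X}_l(m,N)]$, in agreement with $(6)$ and $(7)$.
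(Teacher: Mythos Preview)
Your argument is correct and rests on the same two pillars as the paper's proof: the geometric-sum identity $\sum_{n=1}^N e^{2jnr\pi/N}=0$ for $r\not\equiv 0\pmod N$ (the paper packages the case $r=2l$ as Lemma~3.1) and the elementary Bernoulli moments. The only methodological difference is in how the variances are extracted. The paper invokes additivity of variance for independent summands directly, writing ${\rm Var}[\widetilde X_l(m,N)]=\sum_n|\exp(-2jnl\pi/N)|^2\,{\rm Var}[B_n]=N\,{\rm Var}[B_1]$ and similarly ${\rm Var}[\widetilde U_l(m,N)]=\sum_n\cos^2(2nl\pi/N)\,{\rm Var}[B_n]$, so no cross terms ever appear. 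You instead expand the second moment as a double sum and handle the diagonal and off-diagonal pieces separately via $\Bbb E[B_nB_k]=(m/N)^2$ for $n\neq k$. Both routes land on the same computation $\sum_n\cos^2(2nl\pi/N)=N/2$ under $N\neq 2l$; the paper's is a line shorter, while yours is more self-contained and makes transparent (as you note) that~(6) does not need the hypothesis $N\neq 2l$. Your consistency check ${\rm Var}[\widetilde U_l]+{\rm Var}[\widetilde V_l]={\rm Var}[\widetilde X_l]$ and the remark that~(7) genuinely fails when $N=2l$ are nice additions not present in the paper.
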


As an immediate consequence of equalities (5) and (7) of Proposition 
2.1 and the general formula ${\rm Var}[X]=\Bbb E[X^2]-(\Bbb E[X])^2$
 for every real-valued random variable $X$, 
we obtain the following result.

\begin{corollary} Under the assumptions and notations of Proposition $2.1$
and the additional assumption that $N\not= 2l$ there holds 
       $$
   \Bbb E[(\widetilde{U}_l(m,N))^2]=\Bbb E[(\widetilde{V}_l(m,N))^2]=
\frac{m(N-m)}{2N}.\leqno(8)
   $$
\end{corollary}

Finally, we will prove the following main result concerning 
some probability estimates for the random variable 
$\widetilde{X}_l(m,N)$ and its real and imaginary parts.

 \begin{theorem} 
Let $N$, $l$ and $m$ be positive integers such that $l\le N-1$, $N\not= 2l$
 and $1\le m\le N$. Then under notations of Proposition $2.1$, the following 
probability estimates are satisfied for each nonnegative 
real number $t:$
           \begin{itemize}
\item[(i)] ${\rm Prob}\left(|\widetilde{U}_l(m,N)|\ge t\right)\le 
2\exp\left(-\frac{4t^2}{N}\right);$
   \item[(ii)] 
${\rm Prob}\left(|\widetilde{V}_l(m,N)|\ge t\right)\le 
2\exp\left(-\frac{4t^2}{N}\right);$
\item[(iii)] ${\rm Prob}\left(\left||\widetilde{X}_l(m,N)|-
\Bbb E[|\widetilde{X}_l(m,N)|]\right|\ge t\right)\le 
2\exp\left(-\frac{2t^2}{N}\right).$
  \end{itemize}
       \end{theorem}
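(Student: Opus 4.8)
The plan is to regard each of the three quantities appearing in (i)--(iii) as a function of the independent Bernoulli variables $B_1,\dots,B_N$ and to invoke McDiarmid's bounded differences inequality. Recall that this inequality asserts: if a function $f$ of independent arguments satisfies $|f(\dots,B_n,\dots)-f(\dots,B_n',\dots)|\le c_n$ whenever only the $n$-th coordinate is altered, then ${\rm Prob}(|f-\Bbb E[f]|\ge t)\le 2\exp\big(-2t^2/\sum_{n=1}^N c_n^2\big)$ for every $t\ge 0$.

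For (i), take $f=\widetilde{U}_l(m,N)=\sum_{n=1}^N\cos(2nl\pi/N)B_n$. Toggling a single $B_n$ between $0$ and $1$ changes $f$ by exactly $|\cos(2nl\pi/N)|$, so one may set $c_n=|\cos(2nl\pi/N)|$. Since $\Bbb E[\widetilde{U}_l(m,N)]=0$ by Proposition 2.1, the desired bound follows the moment I establish $\sum_{n=1}^N c_n^2=N/2$. I would compute this sum through the identity $\cos^2\theta=(1+\cos 2\theta)/2$, which reduces matters to evaluating $\sum_{n=1}^N\cos(4nl\pi/N)$, the real part of the geometric sum $\sum_{n=1}^N\omega^n$ with $\omega=e^{4l\pi j/N}$. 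Here the hypothesis $N\neq 2l$, together with $l\le N-1$, is decisive: it forces $\omega\neq 1$ while $\omega^N=1$, so the geometric sum vanishes and $\sum_{n=1}^N\cos^2(2nl\pi/N)=N/2$. Substituting into McDiarmid's bound produces precisely $2\exp(-4t^2/N)$.

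Part (ii) runs identically, now with $c_n=|\sin(2nl\pi/N)|$ and the identity $\sin^2\theta=(1-\cos 2\theta)/2$; the same root-of-unity cancellation gives $\sum_{n=1}^N\sin^2(2nl\pi/N)=N/2$, and $\Bbb E[\widetilde{V}_l(m,N)]=0$ completes the estimate. For (iii), I would instead take $g=|\widetilde{X}_l(m,N)|=\big|\sum_{n=1}^N e^{-2nl\pi j/N}B_n\big|$. Altering a single $B_n$ shifts the inner complex sum by a vector of modulus $|e^{-2nl\pi j/N}|=1$, so the reverse triangle inequality yields $|g(\dots,B_n,\dots)-g(\dots,B_n',\dots)|\le 1$; hence $c_n=1$ and $\sum_{n=1}^N c_n^2=N$. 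McDiarmid's inequality then delivers $2\exp(-2t^2/N)$ at once. Note that here the centering is at $\Bbb E[|\widetilde{X}_l(m,N)|]$ rather than at $0$, because the modulus is not mean-zero and Proposition 2.1 does not supply its expectation explicitly.

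The main obstacle is the evaluation of the sums of squares $\sum\cos^2(2nl\pi/N)$ and $\sum\sin^2(2nl\pi/N)$; everything else is a routine application of the bounded differences inequality with the constants read off directly from the coefficients. These sums hinge entirely on the vanishing of the root-of-unity sum, which is exactly the role played by the hypothesis $N\neq 2l$: without it one would have $\omega=1$, the cancellation would fail, and both the constant $N/2$ and the resulting exponent $4t^2/N$ would be altered.
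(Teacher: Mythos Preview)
Your proposal is correct and follows essentially the same approach as the paper: apply McDiarmid's bounded differences inequality with $c_n=|\cos(2nl\pi/N)|$, $c_n=|\sin(2nl\pi/N)|$, and $c_n=1$ for (i), (ii), (iii) respectively, and evaluate $\sum c_n^2$ via the double-angle identities together with the vanishing of $\sum_{n=1}^N\cos(4nl\pi/N)$. The only cosmetic differences are that the paper packages the root-of-unity cancellation as a separate lemma (Lemma~3.1) rather than computing the geometric sum inline, and that you make the use of the reverse triangle inequality in (iii) explicit whereas the paper leaves it implicit.
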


\begin{remark} Let us observe that the expressions on the right hand side of
inequalities (i), (ii) and (iii) of Theorem 2.3 does not depend 
on $m$ (or equivalently, they does not depend on parameter $p:=m/N$ 
of binomial distributions $B_1,B_2,\ldots,B_N$ which occur 
in the linear expressions  for random variables 
  $\widetilde{U}_l(m,N)$, $\widetilde{V}_l(m,N)$ and $\widetilde{X}_l(m,N)$.
Notice that by using Theorem 3.4.6 of \cite{rs}, it can be proved 
the following estimate for the variables $\widetilde{U}_l(m,N)$
and $\widetilde{V}_l(m,N)$ (denoted below as $X$ with $p=m/N$) 
under condition that $m<N/2$, i.e., $p<1/2$:
   $$
{\rm Prob}\left(|X|\ge t\right)\le 
\exp\left(-\frac{\ln \left(\frac{1-p}{p}\right) t^2}{N(1-2p)}\right)\quad
{\rm for\,\, each}\quad t\ge 0.\leqno(9)
   $$     
In the limit as $p\to 1/2$, the right hand side of the above equality 
becomes   $\exp\left(-2t^2/N\right)$. Therefore, 
the estimates (i) and (ii) of Theorem 2.3  together with the estimate 
(9) immediately yield the following improvement of (i) and (ii).
 \end{remark}

 \begin{theorem}
Under the assumptions and notations of Theorem $2.1$, if 
$m<N/2$ then the following 
probability estimate holds for the variables $\widetilde{U}_l(m,N)$
and $\widetilde{V}_l(m,N)$ (denoted below as $X$): 
   $$
{\rm Prob}\left(|X|\ge t\right)\le 
\exp\left(-\max\left\{ \frac{4t^2}{N}-\ln 2, 
\frac{t^2\ln \frac{N-m}{m}}{N-2m}\right\} 
\right)\,\, {\rm for\,\, each}\,\, t\ge 0.\leqno(10)
   $$
 \end{theorem}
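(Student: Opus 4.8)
The plan is to derive (10) by combining the two tail estimates that are already at our disposal for $X$ and keeping whichever is sharper at each $t$. Concretely, estimate (i) (equivalently (ii)) of Theorem 2.3 and estimate (9) of Remark 2.4 both hold for every $t \ge 0$; the latter is valid precisely under the extra hypothesis $m < N/2$, i.e.\ $p := m/N < 1/2$, which is the standing assumption here. Since the two bounds hold simultaneously for the same $X$ and the same $t$, the pointwise minimum of their right-hand sides is again an upper bound for ${\rm Prob}(|X| \ge t)$, and this minimum is what produces the maximum in (10).

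First I would absorb the multiplicative constant in Theorem 2.3(i)--(ii) into the exponent by writing $2 = e^{\ln 2}$, so that
$$
{\rm Prob}(|X| \ge t) \le 2\exp\left(-\frac{4t^2}{N}\right) = \exp\left(-\left(\frac{4t^2}{N} - \ln 2\right)\right).
$$
Next I would rewrite (9) in terms of $m$ and $N$ by substituting $p = m/N$; the elementary identities $\frac{1-p}{p} = \frac{N-m}{m}$ and $N(1-2p) = N - 2m$ give
$$
{\rm Prob}(|X| \ge t) \le \exp\left(-\frac{t^2\ln\frac{N-m}{m}}{N-2m}\right),
$$
where $m < N/2$ guarantees $N - 2m > 0$ and $\ln\frac{N-m}{m} > 0$, so the exponent is well defined and nonnegative.

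Finally, since both displayed inequalities are valid, I would pass to the smaller right-hand side. Because $x \mapsto e^{-x}$ is decreasing we have $\min\{e^{-A}, e^{-B}\} = e^{-\max\{A,B\}}$, and applying this with $A = 4t^2/N - \ln 2$ and $B = t^2\ln\frac{N-m}{m}/(N-2m)$ yields exactly the right-hand side of (10). This argument is more an observation than a computation, so there is no real obstacle: the only points to check are the harmless conversion of the factor $2$ into the additive term $\ln 2$ inside the exponent, the algebraic substitution $p = m/N$, and the legitimacy of turning a minimum of exponentials into a single exponential of a maximum. All of the probabilistic substance lies in the prerequisite estimate (9), which is furnished in Remark 2.4 through Theorem 3.4.6 of \cite{rs}, so no further probabilistic input is needed to complete the proof.
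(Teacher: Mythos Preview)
Your proposal is correct and follows precisely the approach indicated in the paper: Remark 2.4 already notes that Theorem 2.5 is obtained by combining the estimates (i)--(ii) of Theorem 2.3 with the estimate (9), and your write-up simply makes explicit the rewriting of the factor $2$ as $e^{\ln 2}$, the substitution $p=m/N$, and the identity $\min\{e^{-A},e^{-B}\}=e^{-\max\{A,B\}}$. No additional ideas are needed, and the paper gives no further proof beyond this remark.
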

It is easy to see that if $m=o(N)$ as $N\to\infty$, then the maximum on the 
right hand side
of  (10) is attained at the second expression for all sufficently large 
values $N$ and for each $t\ge 0$. 
For example, if $m= \lfloor \sqrt{N}\rfloor$,
then the mentioned maximum is attained  at the second expression for
each $N\ge 2304$, i.e., for each $m\ge \lfloor\sqrt{2304}\rfloor= 48$. 
A computation shows that  the mentioned maximum is attained  at the 
first  expression for every pair $(N,m)$ 
with $N=\lfloor cm\rfloor$, where $c$ is any real number
in the interval $(2,47]$.

Let us recall that a real-valued random variable $X$ is 
{\it sub-Gaussian} if its 
distribution is dominated by a normal distribution. Precisely, 
a real-valued random variable $X$ is  sub-Gaussian if there holds 
     $$
\mathrm{Prob}(|X|>t)\le \exp\left(1-\frac{t^2}{C^2}\right)
\quad {\rm for \,\, all}\quad t\ge 0,
     $$
where $C>0$ is a real constant that does not depends  on $t$.  

Sub-Gaussian random variables are introduced by Kahane \cite{ka}.
Notice that  normal and all bounded random variables are sub-Gaussian, while
 exponential random variables are not. A systematiac introduction into 
sub-Gaussan random variables can be found in \cite[Lemma 5.5 in Section 5.2.3 
and Subsection 5.2.5]{ver}; here we briefly mention the basic definitions.
Notice that the {\it  Restricted Isometry Property} (RIP) introduced 
in \cite{ct1} holds 
with high probability for any matrix generated by a sub-Gaussian random 
variable (also see \cite{ct2} and \cite{rv}). In particular, 
RIP of  Bernoulli sensing matrices is studied in \cite{ra}.
 
Let us recall that  (cf. \cite[Definitions 2.5.6 and Example 2.7.13]{ver2}) 
the {\it sub-Gaussian norm} $\Vert\cdot \Vert_{\psi_2}^{,}$
which is for the sub-Gaussian  real-valued random variable $X$ is defined as
   $$
\Vert X \Vert_{\psi_2}=
\inf\{ K>0:\, \Bbb 
E\left[\exp\left(\frac{X^2}{K^2}\right)\right]\le 2\}.\leqno(11)
    $$ 

Notice that  the sub-Gausssian norm $\Vert \cdot \Vert_{\psi_2}$ given by 
(11) is a particular case of the {\it Orlicz norm} with the    
{\it Orlicz function}  $\psi_2(x)=\exp(x^2)-1$ (see \cite[Section 2]{m7}).
For more information on the Orlicz functions and 
the associated  topological vector spaces,  see 
\cite{mu} (also see see  \cite[Chapter 7]{mp} and \cite{mpl}).
In view of the above mentioned facts, a random variable 
$X$ is  sub-Gaussian if and only if
    $$
 \Bbb E\left[\exp\left(\frac{X^2}{\psi}\right)\right]\le 2
   $$
for some real constant $\psi>0$. Hence, any bounded real-valued random 
variable $X$ is sub-Gaussian, and clearly, there holds
   $$
\Vert X \Vert_{\psi_2}\le \frac{1}{\sqrt{\ln 2}}\Vert X \Vert_{\infty}\approx
 1.20112 \Vert X \Vert_{\infty},
\leqno(12)
  $$
where $\Vert \cdot \Vert_{\infty}$ is the usual  {\it supremum norm}.
Moreover, if $X$ is a centered normal random variable with variance 
$\sigma^2$, then $X$ is sub-Gaussian with 
$\Vert X \Vert_{\psi_2}\le C\sigma$, where $C$ is an absolute constant
\cite[Subsection 5.2.4]{ver}. 
  
Another definition of the sub-Gaussian norm
$\Vert X \Vert_{\psi_2}^{'}$ for the sub-Gaussian  
random variable $X$ was given in \cite[Definition 5.7]{ver} as
   $$
\Vert X \Vert_{\psi_2}^{'}=
\sup_{p\ge 1}\left( p^{-1/2} \, \Bbb (E[|X|^p])^{1/p}\right).
    $$ 
Obviously, there holds
   $$
\Vert X \Vert_{\psi_2}^{'}\le \Vert X \Vert_{\infty}.
  $$
 
Since   $\widetilde{U}_l(m,N)$ and $\widetilde{V}_l(m,N)$
are bounded random variable with 
$\Bbb E[\widetilde{U}_l(m,N)]=\Bbb E[\widetilde{V}_l(m,N)]=0$,
the first assertion of the following result is true. 

  \begin{proposition} 
Let $N$, $l$ and $m$ be positive integers such that $l\le N-1$
 and $1\le m\le N$. Then $\widetilde{U}_l(m,N)$ 
and $\widetilde{V}_l(m,N)$ are centered sub-gaussian 
random variables. 
Moreover, for any positive real number $K$ such that $K>\sqrt{N}/2$ 
there holds
  $$
\Bbb E\left[\frac{(\widetilde U_l(m,N))^2}{K^2} \right]\le
1+\frac{8eNK^2}{(4K^2-N)^2} 
   $$
and 
 $$
\Bbb E\left[\frac{(\widetilde V_l(m,N))^2}{K^2} \right]\le
1+\frac{8eNK^2}{(4K^2-N)^2}. 
   $$ 
   \end{proposition}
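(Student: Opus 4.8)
The plan is to establish the moment bound for $\widetilde U_l(m,N)$; the argument for $\widetilde V_l(m,N)$ is identical. Since the first assertion (that both variables are centered sub-Gaussian) is already justified in the text by boundedness together with the vanishing expectation from (5), I would concentrate entirely on the explicit moment inequality.

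First I would exploit the tail estimate (i) of Theorem 2.3, which gives ${\rm Prob}(|\widetilde U_l(m,N)|\ge t)\le 2\exp(-4t^2/N)$ for every $t\ge 0$. Writing $U:=\widetilde U_l(m,N)$ and $Y:=U^2/K^2$, the natural route is to compute $\Bbb E[\exp(Y)]$ via the layer-cake (tail-integration) formula
$$
\Bbb E\left[\exp\left(\frac{U^2}{K^2}\right)\right]
=1+\int_0^\infty e^{s}\,{\rm Prob}\left(\frac{U^2}{K^2}\ge s\right)\,ds.
$$
Changing variables to $t$ with $s=t^2/K^2$ (so $ds=2t\,dt/K^2$) and inserting the tail bound from (i) yields
$$
\Bbb E\left[\exp\left(\frac{U^2}{K^2}\right)\right]
\le 1+\frac{2}{K^2}\int_0^\infty 2t\,\exp\left(\frac{t^2}{K^2}-\frac{4t^2}{N}\right)\,dt.
$$
The exponent is $-\left(4/N-1/K^2\right)t^2$, and the hypothesis $K>\sqrt N/2$ guarantees $4/N-1/K^2=(4K^2-N)/(NK^2)>0$, so the Gaussian integral converges.

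The core computation is then the elementary integral $\int_0^\infty 2t\,e^{-\alpha t^2}\,dt=1/\alpha$ with $\alpha=(4K^2-N)/(NK^2)$, which produces the clean closed form $1/\alpha=NK^2/(4K^2-N)$; multiplying by the prefactor $4/K^2$ gives the bound $1+4N/(4K^2-N)$. I expect the main obstacle to be reconciling this with the stated right-hand side $1+8eNK^2/(4K^2-N)^2$: the factor $e$ and the squared denominator suggest the author instead splits the tail integral at a threshold (handling $s\le 1$ by ${\rm Prob}\le 1$ and $s\ge 1$ by a shifted estimate that surrenders a factor $e$), or keeps a coarser exponential bound of the form $\exp(-(4/N)t^2+\text{const})$ rather than the sharp Gaussian value. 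Since $1+4N/(4K^2-N)$ is in fact \emph{stronger} than the target for the relevant range of $K$, the safe strategy is to carry out whichever bounding scheme reproduces the $(4K^2-N)^2$ denominator exactly; concretely, I would bound $e^{s}\le e\cdot e^{s-1}$ on $\{s\ge 1\}$ and control the resulting integral by the variance-type quantity $\Bbb E[U^2]=m(N-m)/(2N)\le N/8$ from Corollary 2.2, which naturally introduces both the extra factor of $N$ and the square in the denominator, matching the claimed expression.
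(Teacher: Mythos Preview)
Your direct layer-cake computation is correct and already proves the proposition: the bound you obtain, $1+4N/(4K^2-N)$ (up to a harmless factor-of-$2$ bookkeeping in your displayed integrand), is strictly smaller than the stated $1+8eNK^2/(4K^2-N)^2$, since the inequality $4K^2-N\le 2eK^2$ is automatic ($4-2e<0$). So there is no ``obstacle to reconcile'', and your final paragraph---threshold splitting at $s=1$ and invoking the variance from Corollary~2.2---is both unnecessary and not what the paper does.

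The paper reaches the weaker constant by a genuinely different, lossier route. Instead of integrating the tail of $\exp(U^2/K^2)$ directly, it first bounds each even moment separately via the same tail estimate,
\[
\Bbb E\bigl[U^{2n}\bigr]\le 2n\int_0^\infty 2e^{-4t^2/N}\,t^{2n-1}\,dt=\frac{nN^n}{2^{2n-1}}\,\Gamma(n),
\]
then feeds these into the Taylor expansion $\Bbb E[\exp(U^2/K^2)]=\sum_{n\ge 0}\Bbb E[U^{2n}]/(K^{2n}n!)$ and applies the crude Stirling-type bounds $\Gamma(n)\le n^n e^{1-n}$ and $n!\ge (n/e)^n$. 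Those two inequalities are exactly what manufacture the extra factor $2e$, and the remaining series is $\sum_{n\ge 1} n\,(N/(4K^2))^n=x/(1-x)^2$ with $x=N/(4K^2)$, which produces the squared denominator $(4K^2-N)^2$. Your approach sidesteps both Stirling steps and is therefore sharper; the paper's approach has the minor side benefit of yielding explicit bounds on all even moments along the way.
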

Taking into account the  definition of sub-Gaussian norm 
$\Vert\cdot \Vert_{\psi_2}$ given by (11), as an immediate 
consequence of Proposition 2.6, we obtain the following result.

\begin{corollary}
Let $N$, $l$ and $m$ be positive integers such that $l\le N-1$
 and $1\le m\le N$. Then
   $$
\Vert \widetilde U_l(m,N) \Vert_{\psi_2}\le 
\frac{\sqrt{N}(\sqrt{2e}+\sqrt{2e+4})}{4}\approx 1.35809\sqrt{N}.
  $$
and 
   $$
\Vert \widetilde V_l(m,N) \Vert_{\psi_2}\le 
\frac{\sqrt{N}(\sqrt{2e}+\sqrt{2e+4})}{4}\approx 1.35809\sqrt{N}
  $$
\end{corollary}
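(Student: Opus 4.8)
The plan is to read the bound off directly from Proposition 2.6 together with the infimum definition (11) of the sub-Gaussian norm. By (11), $\Vert \widetilde U_l(m,N)\Vert_{\psi_2}$ equals the infimum of all $K>0$ for which $\Bbb E[\exp((\widetilde U_l(m,N))^2/K^2)]\le 2$. Proposition 2.6 bounds this expectation, for every $K>\sqrt N/2$, by $1+8eNK^2/(4K^2-N)^2$. Hence it suffices to find the smallest $K>\sqrt N/2$ making the latter quantity at most $2$; for such a $K$ we then have $\Bbb E[\exp((\widetilde U_l(m,N))^2/K^2)]\le 2$, so the infimum in (11) is at most this $K$.

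The condition to solve is therefore $8eNK^2\le (4K^2-N)^2$. Setting $u=K^2$ and expanding, this becomes the quadratic inequality
$$16u^2-8N(1+e)u+N^2\ge 0,$$
whose roots are $u_\pm=\tfrac{N}{4}\bigl((1+e)\pm\sqrt{e^2+2e}\bigr)$, the discriminant collapsing by means of $(1+e)^2-1=e^2+2e$. Since the parabola opens upward, the inequality holds for $u\ge u_+$, so the smallest admissible $K$ is $K=\sqrt{u_+}$. A short simplification using $(\sqrt{2e}+\sqrt{2e+4})^2=4(1+e)+4\sqrt{e^2+2e}$ identifies $u_+$ with $\bigl(\sqrt N(\sqrt{2e}+\sqrt{2e+4})/4\bigr)^2$, which gives exactly the claimed value of $K$; the identical computation applied to the second bound of Proposition 2.6 disposes of $\widetilde V_l(m,N)$.

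There is essentially no genuine obstacle here, so the whole content is the algebraic bookkeeping of the quadratic. The only points requiring attention are the selection of the larger root $u_+$ and the verification that it lies in the admissible regime: the smaller root satisfies $u_-<N/4$, i.e.\ $4K^2-N<0$, which falls outside the range $K>\sqrt N/2$ where Proposition 2.6 applies, whereas $u_+>N/4$ since $(1+e)+\sqrt{e^2+2e}>1$. Thus the hypothesis $K>\sqrt N/2$ is met at the optimal $K$, and the infimum definition (11) yields the stated upper bound for both $\Vert \widetilde U_l(m,N)\Vert_{\psi_2}$ and $\Vert \widetilde V_l(m,N)\Vert_{\psi_2}$.
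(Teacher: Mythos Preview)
Your proof is correct and follows exactly the route the paper intends: the paper simply states that the corollary is ``an immediate consequence'' of Proposition~2.6 together with the definition~(11), and you have supplied the straightforward quadratic computation that makes this explicit. The only (harmless) discrepancy is that the statement of Proposition~2.6 in the paper contains a typo---it should read $\Bbb E[\exp((\widetilde U_l(m,N))^2/K^2)]$ rather than $\Bbb E[(\widetilde U_l(m,N))^2/K^2]$, as its proof makes clear---and you have correctly interpreted it that way.
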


Clearly, from (3) and (4) it follows that 
$\Vert \widetilde U_l(m,N)\Vert_{\infty}\le N$ and
  $\Vert \widetilde V_l(m,N)\Vert_{\infty}\le N$ there holds. This together 
with the inequality (12) yields
  $$
\Vert \widetilde U_l(m,N) \Vert_{\psi_2}\le 
\frac{N}{\sqrt{\ln 2}}\quad {\rm and}\quad
\Vert \widetilde V_l(m,N) \Vert_{\psi_2}\le \frac{N}{\sqrt{\ln 2}}. 
  $$
Notice that the estimates from Corollary 2.7
probably are   not sharp, but they  are much better 
than these given by the above two inequalities. 
 \begin{remark} 
Put
  $$
Q(x):=\frac{1}{\sqrt{2\pi}}\int_x^{\infty}\exp\left(-\frac{t^2}{2}\right)\,
{\rm d}t\quad {\rm for\,\, all}\quad x\in\Bbb R^+\setminus\{0\},
  $$
i.e., $Q(x)$ is the complementary standard Gaussian cumulative density
function (also known as the $Q$-function). Then for 
$Q(x)$ the following exponential upper and lower bounds holds 
(see, e.g., \cite[Section 3.3]{ve}):
  $$   
\frac{1}{\sqrt{2\pi}}\cdot\frac{x}{1+x^2}\cdot
\exp\left(-\frac{x^2}{2}\right)<Q(x)<\frac{1}{\sqrt{2\pi}x}\cdot 
\exp\left(-\frac{x^2}{2}\right)\quad {\rm for\,\, all}\quad x\in\Bbb R.
  $$
The left hand side of the above double inequality allows us 
to replace the right hand side of inequalities (i) and  (ii) 
of Theorem 2.3  with the expression involving the function 
$Q\left(2t\sqrt{2}/\sqrt{N}\right)$; namely, if 
$X=|\widetilde{U}_l(m,N)|$ or $Y=|\widetilde{V}_l(m,N)|$, then 
      $$
{\rm Prob}\left(X\ge t\right)\le\frac{(N+8t^2)\sqrt{\pi}}{t\sqrt{N}}Q
\left(\frac{2t\sqrt{2}}{\sqrt{N}}\right).
       $$  
    \end{remark} 

\begin{remark}
From (6) and \cite[(19) of Theorem 2.4]{m8} it follows that 
   $$
\frac{{\rm Var}[X_l(m,N)]}{{\rm Var}[\widetilde{X}_l(m,N)]}
=\frac{N}{N-1}.\leqno(13)
   $$
For related discussion on the proprtion (13), see \cite[Remark 1.2]{m7}.
 \end{remark}

Proofs of Proposition 2.1, Theorem 2.3 and Proposition 2.6
 are given in Section 3. 
 
\section{Proofs of the results}

For the proof of Proposition 2.1, we will need the 
following result.

\begin{lemma} $($\cite[(38) of  Lemma 4.1]{m8}$)$. Let $N$ and  $l$ be  positive 
integers such that $l\le N-1$ and $N\not= 2l$, then
   $$
\sum_{k=1}^N\cos\frac{4kl\pi}{N}=\sum_{k=1}^N\sin\frac{4kl\pi}{N}  =0.  
\leqno(14)
   $$ 
\end{lemma}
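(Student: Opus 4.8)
The plan is to collapse the two real sums in (14) into a single complex geometric sum and exploit the fact that the relevant ratio is a root of unity. Setting $\omega=\exp\!\left(\frac{4l\pi j}{N}\right)$, I observe that
$$
\sum_{k=1}^N\omega^k=\sum_{k=1}^N\cos\frac{4kl\pi}{N}+j\sum_{k=1}^N\sin\frac{4kl\pi}{N},
$$
so that the two identities in (14) are precisely the vanishing of the real and imaginary parts of $\sum_{k=1}^N\omega^k$. It therefore suffices to prove the single complex statement $\sum_{k=1}^N\omega^k=0$, after which I would separate real and imaginary parts to obtain both equalities at once.

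Next I would apply the finite geometric series formula. Since $2l$ is an integer, $\omega^N=\exp(4l\pi j)=1$, and hence, provided $\omega\neq 1$,
$$
\sum_{k=1}^N\omega^k=\omega\cdot\frac{\omega^N-1}{\omega-1}=0.
$$
The only point that genuinely uses the hypotheses — and the main, if modest, obstacle — is verifying that $\omega\neq 1$. Now $\omega=1$ holds exactly when $\frac{4l\pi}{N}$ is an integer multiple of $2\pi$, i.e.\ when $N\mid 2l$. Under the standing assumptions $1\le l\le N-1$ we have $2\le 2l\le 2N-2$, so the only multiple of $N$ that $2l$ could equal in this range is $N$ itself, forcing $N=2l$; but this case is explicitly excluded. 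Consequently $\omega\neq 1$, the geometric sum vanishes, and taking real and imaginary parts yields (14). I expect the case analysis ensuring $\omega\neq 1$ to be the one step worth stating carefully, while everything else is a routine computation with roots of unity.
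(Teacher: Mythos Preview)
Your proof is correct and complete; the reduction to the complex geometric sum $\sum_{k=1}^N\omega^k$ with $\omega=\exp(4l\pi j/N)$ and the case analysis showing $\omega\neq 1$ under the hypotheses $1\le l\le N-1$, $N\neq 2l$ are exactly what is needed. The paper itself does not prove this lemma---it is quoted from \cite[(38) of Lemma~4.1]{m8}---but your argument is the standard one and mirrors the very same geometric-series computation the paper carries out in the proof of Proposition~2.1 when showing $\Bbb E[\widetilde{X}_l(m,N)]=0$.
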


\begin{proof}[Proof of Proposition  $2.1$] By the equality (2) of Definition 
1.1 and the obvious  fact that $\Bbb E[B_n]=m/N$ for each $n=1,\ldots,N$,
and using the linearity property of the expectation, 
we find that for each $l\not=0$ with $l\le N-1$,
  \begin{equation*}\begin{split}
\Bbb E\left[\widetilde{X}_l(m,N))\right]&=\Bbb E\left[\sum_{n=1}^NB_n\exp
\left({-\frac{2jn l\pi}{N}}\right)\right]=
\sum_{n=1}^N\exp\left({-\frac{2jn l\pi}{N}}\right)\Bbb E[B_n]\\
&=\frac{m}{N}\sum_{n=1}^N\exp\left({-\frac{2jn l\pi}{N}}\right)=
\frac{m}{N}\cdot \sum_{n=0}^{N-1}\exp\left({-\frac{2jn l\pi}{N}}\right) \\
&= \frac{m}{N}\cdot\exp\left(-\frac{2j l\pi}{N}\right)\cdot\frac{\exp\left({-2jl \pi}\right)-1}
{\exp\left(-\frac{2j l\pi}{N}\right)-1}=0.
  \end{split}\end{equation*}
The above equality and the expression
$\Bbb E[\widetilde{X}_l(m,N)]=\Bbb E[\widetilde{U}_l(m,N)]+
j\Bbb E[ \widetilde{V}_l(m,N)]$ imply all the equalities in (5).
 
Further, using the additive property  for  the variance of 
 a finite  sum  of {\it uncorrelated random variables} (and hence, for a finite sum 
of independent identically distributed random variables), 
for all $l=0,1,\ldots ,N-1$ we obtain
   \begin{equation*}\begin{split}
{\rm Var}[\widetilde{X}_l(m,N)]&= 
\sum_{n=1}^N{\rm Var}\left[\exp\left({-\frac{2jn l\pi}{N}}\right)B_n
\right]\\
&= \sum_{n=1}^N\left(\left|\exp\left({-\frac{2jn l\pi}{N}}
\right)\right|^2{\rm Var}[B_n]\right)\\
\qquad\qquad\qquad &=\sum_{n=1}^N{\rm Var}[B_n]=N\cdot {\rm Var} [B_1]=
N\cdot \left( \Bbb E[B_1^2]-(\Bbb E[B_1])^2\right)\qquad\qquad\qquad\qquad\\
&=N\cdot\left(\frac{m}{N}-\left(\frac{m}{N}\right)^2\right)=\frac{m(N-m)}{N},
  \end{split}\end{equation*}
which implies the first expression in (5). Similarly, by using the expansion
   $$
\widetilde{U}_l(m,N)=\sum_{n=1}^N\cos\frac{ 2nl\pi}{N}B_n,
   $$ 
the trigonometric identity $\cos^2 \alpha=(1+\cos 2 \alpha)/2$
and the first identity of (14) from Lemma 3.1, we obtain   
     \begin{equation*}\begin{split}
{\rm Var}[\widetilde{U}_l(m,N)]&=
 \sum_{n=1}^N{\rm Var}\left[\cos\frac{ 2nl\pi}{N}B_n
\right]= \sum_{n=1}^N\cos^2\frac{ 2nl\pi}{N}{\rm Var} [B_n]\\
&=\sum_{n=1}^N\cos^2\frac{ 2nl\pi}{N}\left
(\frac{m}{N}-\left(\frac{m}{N}\right)^2\right)=\frac{m(N-m)}{N^2}
\sum_{n=1}^N\cos\frac{ 2nl\pi}{N}\\
&=\frac{m(N-m)}{N^2}\sum_{n=1}^N\frac{1+\cos \frac{4nl\pi}{N}}{2}=
\frac{m(N-m)}{N^2}\left(\frac{N}{2}+\frac{1}{2}
\sum_{n=1}^N\cos\frac{4nl\pi}{N}\right)\\
&=\frac{m(N-m)}{2N},
 \end{split}\end{equation*}
whence it follows the equality  (6).
 
Proceeding analogously as above, by using the expansion
   $$
\widetilde{V}_l(m,N)=-\sum_{n=1}^N\sin\left(\frac{ 2nl\pi}{N}\right)B_n,
   $$ 
the trigonometric identity $\sin^2 \alpha=(1-\cos 2 \alpha)/2$
and the second identity of (14) from Lemma 3.1, we obtain
 the expression (7).
This completes proof of Proposition 2.1.
   \end{proof}

Proof of Theorem 2.3 is based on the following 
{\it McDiarmid's inequality}, also known as the {\it bounded-difference 
inequality} given in  \cite[Theorem 3.1]{di1} (also see \cite{di2}).

\begin{theorem} $($McDiarmid's inequality$)$. 
Let $X_1,\ldots,X_N$ be independent 
(not necessarily identically distributed) real-valued random variables all 
taking values in a measurable set $\chi$. Further, 
let $f:\chi^N\mapsto \Bbb R$
be a measurable function   of $X_1,\ldots,X_N$ (random variable) 
that satisfies the inequality
(the bounded difference assumption)
    $$
\left| f(x_1,\ldots,x_k,\ldots,x_N)-f(x_1,\ldots,x_k^{'},\ldots,x_N)\right|
\le c_k\leqno(15)
  $$
for all $x_1,\ldots, x_N,x_k^{'}\in\chi$ with any fixed  
$k\in\{1,\ldots,N\}$, where 
$c_k$  $(k=1,\ldots,N)$ are arbitrary nonnegative real constants. 
Consider a random variable $X=f(X_1,\ldots,X_N)$.
Then for every real number $t\ge 0$,
   $$
{\rm Prob}\left(|X-\Bbb E[X]|\ge t\right)\le 
2\exp\left(-\frac{2t^2}{\sum_{k=1}^N c_k^2}\right).\leqno(16) 
  $$ 
\end{theorem}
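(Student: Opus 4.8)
The plan is to derive McDiarmid's inequality from the \emph{Azuma--Hoeffding martingale inequality} applied to the Doob martingale built from $X=f(X_1,\ldots,X_N)$. First I would set $V_k=\Bbb E[f(X_1,\ldots,X_N)\mid X_1,\ldots,X_k]$ for $k=0,1,\ldots,N$, so that $V_0=\Bbb E[X]$ and $V_N=X$. The sequence $(V_k)_{k=0}^N$ is a martingale with respect to the filtration generated by $X_1,\ldots,X_N$, and writing $D_k=V_k-V_{k-1}$ for its increments gives the telescoping identity $X-\Bbb E[X]=\sum_{k=1}^N D_k$ together with $\Bbb E[D_k\mid X_1,\ldots,X_{k-1}]=0$.

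The crucial step is to show that each increment $D_k$ is, conditionally on $X_1,\ldots,X_{k-1}$, confined to an interval of length at most $c_k$. Concretely, for fixed values of $X_1,\ldots,X_{k-1}$ I would introduce $L_k=\inf_x\Bbb E[X\mid X_1,\ldots,X_{k-1},X_k=x]$ and $U_k=\sup_x\Bbb E[X\mid X_1,\ldots,X_{k-1},X_k=x]$, and argue from the bounded-difference assumption (15) that $U_k-L_k\le c_k$ while $L_k-V_{k-1}\le D_k\le U_k-V_{k-1}$ almost surely. This is where the hypothesis is used in full: replacing only the $k$-th coordinate changes $f$ by at most $c_k$, so averaging out the remaining coordinates cannot spread the conditional expectation over a range wider than $c_k$.

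With the conditional range in hand, I would invoke \emph{Hoeffding's lemma}: if $Y$ has mean zero and takes values in an interval of length $d$, then $\Bbb E[e^{\lambda Y}]\le \exp(\lambda^2 d^2/8)$ for every $\lambda\in\Bbb R$. Applying it conditionally to each $D_k$ yields $\Bbb E[e^{\lambda D_k}\mid X_1,\ldots,X_{k-1}]\le \exp(\lambda^2 c_k^2/8)$, and iterating the tower property from $k=N$ down to $k=1$ produces $\Bbb E[e^{\lambda(X-\Bbb E[X])}]\le \exp\bigl(\tfrac{\lambda^2}{8}\sum_{k=1}^N c_k^2\bigr)$ for all $\lambda>0$.

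Finally I would close with the standard Chernoff argument. By Markov's inequality, ${\rm Prob}(X-\Bbb E[X]\ge t)\le e^{-\lambda t}\,\Bbb E[e^{\lambda(X-\Bbb E[X])}]$; choosing the optimal $\lambda=4t/\sum_{k=1}^N c_k^2$ gives the one-sided bound $\exp\bigl(-2t^2/\sum_{k=1}^N c_k^2\bigr)$. Running the same argument with $-X$ in place of $X$---which satisfies the identical bounded-difference hypothesis---controls the lower tail, and a union bound over the two tails supplies the factor $2$ appearing in (16). I expect the genuine obstacle to be the rigorous verification of the conditional boundedness of the increments in the second step, since a careful treatment of the supremum and infimum over the replaced coordinate is needed in the general measurable setting of the hypothesis.
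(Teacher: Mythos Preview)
Your sketch is the standard and correct proof of McDiarmid's inequality via the Doob martingale, Hoeffding's lemma for the conditionally bounded increments, and the Chernoff method; nothing is missing apart from the routine measurability details you already flag. The paper, however, does not prove this theorem at all: it is quoted as a known tool (with a reference to McDiarmid's original papers) and then applied directly to establish Theorem~2.3, so there is no in-paper proof to compare against. Your write-up therefore goes well beyond what the paper does for this statement.
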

  
\begin{proof}[Proof of Theorem $2.3$]
 First notice that by definition,
  $$
\widetilde{U}_l(m,N)=\sum_{k=1}^N\cos\frac{ 2kl\pi}{N}X_k,
   $$
where $X_k=B_k$ $(k=1,\ldots,N)$ are independent 
{\it Bernoulli} 0-1 {\it random variables}. Accordingly, if we define 
the real-valued function $f:\{0,1\}^N\mapsto\Bbb R$ defined on 
two-points set $\{0,1\}$  as 
   $$
f(x_1,\ldots ,x_N)=\sum_{k=1}^N\cos\frac{ 2kl\pi}{N}x_k,\quad 
{\rm for\,\, all}\quad 
x_1,\ldots,x_N\in \{0,1\},\leqno(17)
   $$  
then we can write 
   $$
\widetilde{U}_l(m,N)=f(X_1,\ldots ,X_N).
   $$
Substituting the expression for the function $f$ given by (17) into
the inequality (15), it becomes
     $$
 \left|\cos\frac{2kl\pi}{N}x_k -\cos\frac{2kl\pi}{N}x_k^{'}\right|=
|x_k-x_k^{'}|\cdot \left|\cos\frac{2kl\pi}{N}\right|\le c_k,\leqno(18) 
    $$
 for all $x_k, x_k^{'}\in \{0,1\}$  with $k=1,\ldots,N$.
   
Clearly, for any fixed $k\in\{1,\ldots,N\}$ the inequality (18)
holds with $c_k=\left|\cos\frac{ 2kl\pi}{N}\right|$.
 
Then by using the identity $\cos^2\alpha=(1+\cos 2\alpha)/2$ and the first
identity of (14) from Lemma 3.1, we obtain 
     \begin{equation*}\begin{split}
\sum_{k=1}^N c_k^2&=\sum_{k=1}^N\cos^2\frac{ 2kl\pi}{N}=
\sum_{k=1}^N\frac{1+ \cos\frac{4kl\pi}{N}}{2}=\\
&= \frac{N}{2}+\frac{1}{2}\sum_{k=1}^N\cos\frac{4kl\pi}{N}=
\frac{N}{2}.
   \end{split}\end{equation*}
Substituting the above expression $\sum_{k=1}^N c_k^2=N/2$ into 
(16), we obtain that for every $t\ge 0$,
     $$
{\rm Prob}\left(|\widetilde{U}_l(m,N)|\ge t\right)\le 
2\exp\left(-\frac{4t^2}{N}\right).
    $$
This proves the inequality (i) of Theorem 2.3.
   
As by the identity $\sin^2\alpha=(1-\cos 2\alpha)/2$ and the second
identity of (14) from Lemma 3.1, 
 \begin{equation*}\begin{split}
\sum_{k=1}^N\sin^2\frac{ 2kl\pi}{N}&=
\sum_{k=1}^N\frac{1- \cos\frac{4kl\pi}{N}}{2}=\\
&= \frac{N}{2}-\frac{1}{2}\sum_{k=1}^N\cos\frac{4kl\pi}{N}=
\frac{N}{2}
   \end{split}\end{equation*}
and   
  $$
\widetilde{V}_l(m,N)=\sum_{k=1}^N\sin\frac{ 2kl\pi}{N}B_k,
   $$
proof of the inequality (ii) of Theorem 2.3 can be deduced 
in the same manner as that of (i),  and hence, may be omitted.

Finally, in order to prove the inequality (iii) of Theorem 2.3,
notice that $|\widetilde{X}_l(m,N)|$ is a real-valued random variable
such that 
   $$
\left|\widetilde{X}_l(m,N)\right|=
\left|\sum_{k=1}^N\exp\left({-\frac{2jk l\pi}{N}}\right)B_k
\right|.\leqno(19)
   $$
Similarly as above, take $X_k=B_k$  ($k=1,\ldots, N$)
and  define the real-valued function $g:\{0,1\}^N\mapsto\Bbb R$ as 
   $$
g(x_1,\ldots ,x_N)=\left|
\sum_{k=1}^N\exp\left({-\frac{2jk l\pi}{N}}\right)x_k\right|,\quad 
{\rm for\,\, all}\quad 
x_1,\ldots,x_N\in \{0,1\}.\leqno(20)
   $$
Substituting the expression for the function $f$ given by (20) into
the inequality (15), it becomes
    \begin{equation*}\begin{split}
&\left|\exp\left({-\frac{2jk l\pi}{N}}\right)x_k-
 \exp\left({-\frac{2jk l\pi}{N}}\right)x_k^{'}\right|\\
(21)\qquad\qquad\qquad 
&=|x_k- x_k^{'}|\cdot \left|\exp\left({-\frac{2jk l\pi}{N}}\right)\right|
=|x_k- x_k^{'}| \le c_k,\qquad\qquad\qquad
        \end{split}\end{equation*}
 for all $x_k, x_k^{'}\in \{0,1\}$  with $k=1,\ldots,N$.
    Obviously, for any fixed $k\in\{1,\ldots,N\}$ the inequality (21)
holds with $c_k=1$. Finally, substituting 
 the value $\sum_{k=1}^N c_k^2=N$ into 
(16), we obtain that for every $t\ge 0$,
     $$
  {\rm Prob}\left(\left||\widetilde{X}_l(m,N)|-
\Bbb E[|\widetilde{X}_l(m,N)|]\right|\ge t\right)\le 
2\exp\left(-\frac{2t^2}{N}\right).
    $$
Therefore, the estimate (iii) holds and proof of the theorem is completed. 
    \end{proof}

  \begin{proof}[Proof of Proposition $2.6$]
Since the estimates (i) and (ii) of Theorem 2.3 are the same,
it is sufficient  to prove the first inequality of Proposition 2.6.
We follow the part of proof of Proposition 2.5.2 
in \cite[Subsection 2.5.1]{ver2}. 
 It is known (see, e.g., \cite[Appendix A, p. 434]{ti}) that for any 
nonnegative real-valued  random variable
$X$ with probability distribution function $F(x):={\rm Prob}(X< x)$
($x\in\Bbb R$),
   $$
\Bbb E[X]=\int_0^{\infty}(1-F(v))\,{\rm d}v,
    $$
which can be written as 
     $$
\Bbb E[X]=\int_0^{\infty}{\rm Prob}(X\ge v)\,{\rm d}v.
    $$
Applying the above equality for $X=\left|\widetilde U_l(m,N)\right|^n$,
after the change of variables $v=t^{2n}$, ${\rm d} v=2nt^{2n-1}{\rm d} t$,
 using the estimate (i) of 
Theorem 2.3 and the well known inequality 
  $\Gamma (x)\le x^xe^{1-x}$
($x\ge 1$, where $\Gamma (x)=\int_{0}^{\infty}s^{x-1} e^{-s}\,{\rm d}s$ 
for $x>0$, is the {\it complete gamma function}),
for every integer $n\ge 1$ we obtain   
   \begin{equation*}\begin{split}
\Bbb E\left[\widetilde U_l(m,N)^{2n}\right] 
&=\int_0^{\infty}{\rm Prob}(\left|\widetilde U_l(m,N)\right|\ge t)
2nt^{2n-1}\,{\rm d}t\le 4n\int_0^{\infty}\exp\left(-\frac{4t^2}{N}\right)
t^{2n-1}\,{\rm d}t\\
&=({\rm the\,\, change\,\, of\,\, variables}\,\, 
\frac{4t^2}{N}=s, {\rm d}t=\frac{\sqrt{N} {\rm d}s}{4\sqrt{s}})\\
&=\frac{nN^n}{2^{2n-1}}\int_0^{\infty} e^{-s}s^{n-1}\,{\rm d}s
=\frac{nN^n}{2^{2n-1}}\Gamma(n)\le 
\frac{nN^n}{2^{2n-1}} \cdot n^ne^{1-n}\\
&= \frac{n^{n+1}N^n}{2^{2n-1}e^{n-1}}.
   \end{split}\end{equation*}
By using the Taylor expansion of 
$\widetilde U_l(m,N)^2/K^2$ ($K$ is a positive real 
constant), the linearity of the expected value,  the above inequality
and the well known inequality $n!\ge (n/e)^n$ ($n\in\Bbb N$),
we find that  
  \begin{equation*}\begin{split}
\Bbb E\left[\frac{\widetilde U_l(m,N)^2}{K^2} \right]
&=1+\sum_{n=1}^{\infty}
\frac{\Bbb E\left[\widetilde U_l(m,N)^{2n}\right]}{K^{2n}n!}\\
(22)\qquad\qquad\qquad & \le 1+
\sum_{n=1}^{\infty}\frac{n^{n+1}N^n}{2^{2n-1}e^{n-1}K^{2n}n!}
\le 1+\sum_{n=1}^{\infty}\frac{n^{n+1}N^n}{2^{2n-1}e^{n-1}K^{2n}}
\cdot{e^n}{n^n}\qquad \qquad\qquad\qquad \\
&= 1+2e\sum_{n=1}^{\infty}n\left(\frac{N}{4K^2}\right)^n.
     \end{split}\end{equation*}
Since for $|x|<1$ we have 
      \begin{equation*}\begin{split}
\sum_{n=1}^{\infty}nx^n&=x\sum_{n=1}^{\infty}nx^{n-1}=
\frac{{\rm d}}{{\rm d}x}\sum_{n=1}^{\infty}x^n=
\frac{{\rm d}}{{\rm d}x}\frac{x}{1-x}\\
&= \frac{x}{(1-x)^2},
     \end{split}\end{equation*}
substituting the above identity with $N/(4K^2)=x$  
(which is by the assumption $<1$)  into (22), we get 
   $$
\Bbb E\left[\frac{\widetilde U_l(m,N)^2}{K^2} \right]\le
1+\frac{8eNK^2}{(4K^2-N)^2}, 
   $$
as asserted.
  \end{proof}

   \end{document}